\documentclass[11pt]{article}

\usepackage[a4paper,margin=2.5cm]{geometry}
\usepackage{amsmath,amssymb,amsthm,mathtools}
\usepackage{booktabs}
\usepackage{enumitem}
\usepackage{microtype}
\usepackage{placeins}
\usepackage{hyperref}
\hypersetup{hidelinks}
\usepackage{tikz}
\usepackage{pgfplots}
\usepackage[most]{tcolorbox}
\usetikzlibrary{arrows.meta,positioning,calc,fit}
\pgfplotsset{compat=1.18}

\newtcolorbox{architectnote}[1][]{
  enhanced,breakable,
  colback=black!2,colframe=black!55,
  boxrule=0.5pt,arc=1.2mm,
  left=2mm,right=2mm,top=1.5mm,bottom=1.5mm,
  fonttitle=\bfseries,
  title={Structural note},#1
}
\newtcolorbox{challengebox}[1][]{
  enhanced,breakable,
  colback=black!1,colframe=black!75,
  boxrule=0.8pt,arc=1.2mm,
  left=2mm,right=2mm,top=1.5mm,bottom=1.5mm,
  fonttitle=\bfseries,
  title={Unlabeled challenge},#1
}

\newtheorem{proposition}{Proposition}
\theoremstyle{definition}

\newcommand{\R}{\mathbb{R}}

\newcommand{\Lie}{\mathcal{L}}

\title{Exactness as an Explanatory and Computational Target:\\
A Reduction Architecture for First-Order ODEs, with a Higher-Order Outlook}
\author{Gabriel Ben-Simon\\
\small Afeka Academic College of Engineering\\
\small Tel Aviv, Israel\\
\texttt{GavrielBe@afeka.ac.il}}
\date{}
\hypersetup{pdftitle={Exactness as an Explanatory and Computational Target: A Reduction Architecture for First-Order ODEs, with a Higher-Order Outlook},pdfauthor={Gabriel Ben-Simon}}

\begin{document}
\maketitle

\begin{abstract}
A typical first-order ODE syllabus follows a familiar sequence of topics, which can leave students with little sense of unity among the methods. This ``collection of tricks'' problem is well known to instructors. We propose organizing a substantial part of the classical symbolic first-order syllabus around a single computational target: an exact representative. The fundamental ``atomic'' ODE, \(y'=g(x)\), is solved directly because
\[
d\left(y-\int^x g(s)\,ds\right)=0.
\]
More generally, represent the equation by a one-form on the plane, \(\omega=M\,dx+N\,dy=0\). If \(\omega\) is locally closed, it is locally exact, and one integration produces a first integral. If it is not closed, intermediate actions are needed. The principal symbolic reductions considered here use two recurring structural actions: coordinate adaptation and nonvanishing rescaling by an integrating factor. Thus the reduction problem is to find, on an appropriate coordinate patch, \(\Phi\) and a nonvanishing \(\mu\) such that
\[
\mu\Phi^*\omega=dG,
\]
so that the solution curves are level sets \(G=C\). These ingredients are classical; the contribution here is their target-centered organization. This also yields a two-floor pedagogical view: the \textbf{structural floor} records the target, transformations, and domain information, while the \textbf{operational floor} executes substitutions, normalizations, and integrations. In an AI- and CAS-assisted environment, this suggests emphasizing recognition, justification, auditing, and interpretation while selected routine symbolic steps are delegated. We finally indicate how Lie symmetry systematizes coordinate adaptation and how constant-coefficient linear systems echo the same reduction logic.

This extended version also develops reverse construction, a structural exercise laboratory, comparisons of reduction paths, and a higher-order outlook through first integrals, operator factorization, and structural normalization.
\end{abstract}

\section{Introduction: From direct integration to a master reduction problem}

A first course in ordinary differential equations usually combines three broad themes: existence, uniqueness, and techniques of solution. From the student's point of view, the algorithmic side often receives the most visible emphasis. The techniques are then presented through a succession of named solvable classes---separable, linear, exact, homogeneous, Bernoulli, and, in some courses, Riccati equations. This organization is efficient for manual practice (recognize a pattern, recall the method, execute), but it can leave the mathematics feeling disjointed.

The viewpoint developed here grew out of more than a decade of repeatedly teaching elementary ODEs, together with courses in analysis and linear algebra. The recurring question was whether the standard computational classes could be presented as parts of as few mathematical mechanisms as possible rather than as isolated methods.

Our paper has an important AI angle. Artificial Intelligence (AI) and Computer Algebra Systems (CAS) can now carry out many routine symbolic steps that once occupied a large fraction of hand calculation. In that computational environment, a bag-of-tricks organization \cite{Yap2010} is no longer a sufficient intellectual story by itself. So one may ask: what mathematical structure should the student recognize, justify, and control even when selected calculations are delegated? In the language developed below, the student is asked to act as a \emph{structural architect}: choosing admissible transformations, identifying lost branches (for example after division), and guiding the equation toward a representation in which direct integration becomes available. To start answering, we ask:
What makes the simplest differential equation, \(y'=g(x)\), so elementary to solve? It requires no classification because it is already in the \emph{atomic reference state} of direct integration. Equivalently, its differential one-form is exact:
\[
dy-g(x)\,dx = d\left(y-\int^x g(s)\,ds\right)=0.
\]

Our main observation is that a substantial part of the classical symbolic first-order syllabus can be read as a quest to return to this atomic state. An ODE represented by a one-form $\omega=Mdx+Ndy=0$ is solved by one operation of integration if it is locally closed and thus exact. If it is not, we need intermediate actions to turn it into one. We claim here that the principal symbolic methods of a basic first-order ODE course can be organized around two essential actions.
\[
\boldsymbol{\text{find an admissible }\Phi\text{ and a nonvanishing }\mu\text{ such that }\mu\Phi^*\omega=dG.}
\]
Equivalently, one seeks a closed representative \(d(\mu\Phi^*\omega)=0\) on a coordinate patch where closedness yields a local potential. Once \(G\) is available, the solution is the level set \(G=C\). 

This perspective naturally separates the curriculum into two interacting levels:
\begin{itemize}
    \item \textbf{The Structural Floor:} The mathematical architecture. Here we define the target, select the pullback \(\Phi^*\) and the rescaling \(\mu\), and control domains and excluded branches.
    \item \textbf{The Operational Floor:} The algorithmic execution of the named methods, unpackaging standard substitutions and integration steps.
\end{itemize}

None of the mathematical ingredients presented here are newly discovered; Frobenius's theorem, exact forms, and Lie symmetries are well-established classical concepts \cite{Arnold1992, Hydon2000, Olver1993, Teschl2012}. The historical dots are already there; the contribution proposed here is the particular target-centered architecture obtained by connecting them.

This extended version retains that core first-order architecture and builds on it through reverse construction from \(G,\Phi,\mu\), a structural exercise laboratory, comparisons of reduction paths, and a higher-order outlook. These additions do not alter the first-order claim that exactness is the terminal computational target: once an exact representative \(dG\) has been reached, a first-order equation has already been reduced to the level-set relation \(G=C\), so there is no lower differential order left to reach. The later sections extend what the architecture can be used to organize; they do not replace its terminal first-order node.

The order of the core sections reflects this logic. Section~2 fixes the target and separates the two structural moves; Section~3 asks which familiar classes arise when the rescaling is restricted to depend on only one coordinate; Section~4 assembles the standard named classes into a reduction hierarchy; and Section~5 addresses the discovery problem---where a useful coordinate adaptation can come from when it is not algebraically obvious---through Lie symmetry. Sections~6--8 then test how far the same organizing logic can be carried: first to a vector-valued echo, then to construction and comparison of reductions, and finally to higher-order targets.

\section{The geometric target: \texorpdfstring{\(\Phi\) and \(\mu\)}{Phi and mu}}

Let \(U\subset\R^2\) be open and let \(\omega=M(x,y)\,dx+N(x,y)\,dy\) be a \(C^1\) one-form representing an explicit equation \(y'=f(x,y)\) via the normalized representative \(\omega=dy-f(x,y)\,dx\). Exactness means that \(\omega=dF\) for some potential \(F\). Hence \(M_y=N_x\); conversely, closedness is locally sufficient for exactness and is globally sufficient on a simply connected planar domain. The computational target is therefore a representative from whose coefficients a potential \(F\) can be recovered by ordinary integration.

We work locally on regular coordinate patches. A coordinate change \(\Phi:V\to U_0\subset U\) is a diffeomorphism between such patches, and solution curves satisfy \(\omega(\dot\gamma)=0\). If \(\mu\Phi^*\omega=dG\), then the transformed solutions are connected components of regular level sets of \(G\); applying \(\Phi\) recovers the corresponding solution curves in the original variables.

For the reductions considered here, two mathematically distinct operations are used to reach the target \(\mu\Phi^*\omega=dG\):

\paragraph{1. Rescaling (The Integrating Factor \(\mu\)):} 
Multiplying \(\omega\) by a non-vanishing function \(\mu\) preserves the solution foliation (\(\ker(\mu\omega)=\ker\omega\)) but may change its closedness, since \(d(\mu\omega) = d\mu \wedge \omega + \mu\,d\omega\). Figure~\ref{fig:foliation} illustrates this: for \(\omega=dy-y\,dx\), the factor \(e^{-x}\) does not change the unparameterized curves, but turns them into explicit level sets of \(F(x,y) = e^{-x}y\).

\begin{figure}[ht]
\centering
\begin{minipage}{0.45\textwidth}
\centering
\begin{tikzpicture}
\begin{axis}[
width=\linewidth,
height=5.0cm,
axis lines=middle,
xmin=-1.2,xmax=1.2,
ymin=-3,ymax=3,
xtick={-1,0,1},
ytick={-2,0,2},
title={\(\omega=dy-y\,dx\)},
samples=100,
clip=true]
\addplot[thick,domain=-1.2:1.2] {exp(x)};
\addplot[thick,domain=-1.2:1.2] {0.5*exp(x)};
\addplot[thick,domain=-1.2:1.2] {-0.5*exp(x)};
\addplot[thick,domain=-1.2:1.2] {-exp(x)};
\end{axis}
\end{tikzpicture}
\end{minipage}
\hfill
{\Large\(\Longrightarrow\)}
\hfill
\begin{minipage}{0.45\textwidth}
\centering
\begin{tikzpicture}
\begin{axis}[
width=\linewidth,
height=5.0cm,
axis lines=middle,
xmin=-1.2,xmax=1.2,
ymin=-3,ymax=3,
xtick={-1,0,1},
ytick={-2,0,2},
title={\(dF=e^{-x}\omega,\;F=e^{-x}y\)},
samples=100,
clip=true]
\addplot[thick,domain=-1.2:1.2] {exp(x)};
\addplot[thick,domain=-1.2:1.2] {0.5*exp(x)};
\addplot[thick,domain=-1.2:1.2] {-0.5*exp(x)};
\addplot[thick,domain=-1.2:1.2] {-exp(x)};
\node[anchor=west] at (axis cs:0.55,1.74) {\scriptsize \(F=1\)};
\node[anchor=west] at (axis cs:0.55,0.87) {\scriptsize \(F=\tfrac12\)};
\node[anchor=west] at (axis cs:0.55,-0.87) {\scriptsize \(F=-\tfrac12\)};
\node[anchor=west] at (axis cs:0.55,-1.74) {\scriptsize \(F=-1\)};
\end{axis}
\end{tikzpicture}
\end{minipage}
\caption{The nonvanishing multiplier \(e^{-x}\) preserves the solution line field. The same curves are now represented as level sets of the explicitly available first integral \(F=e^{-x}y\).}
\label{fig:foliation}
\end{figure}

From the broader ODE perspective, an integrating factor is best viewed as a mechanism for producing a first integral and hence lowering the order. Schematically, write \(E[y]=0\) for a differential equation, let \(D_x\) denote total differentiation with respect to \(x\), and let \(I\) be an expression in \(x,y\) and, in higher order, derivatives of \(y\). If a multiplier converts the equation into a total derivative,
\[
\mu\,E[y]=D_x I,
\]
then every solution satisfies \(D_xI=0\), and one integration gives the first integral \(I=C\). In the first-order setting of this paper that reduction is terminal: there is no lower differential order left to solve. This is one reason exactness is such a natural endpoint here rather than merely another convenient form.

\paragraph{2. Coordinate Adaptation (The Pullback \(\Phi^*\)):}
A change of coordinates \(\Phi\) does not create exactness (\(d(\Phi^*\omega)=\Phi^*(d\omega)\)). Its structural role is strictly to expose a representation in which a useful multiplier \(\mu\) or factorization becomes algebraically accessible. 

For a smooth regular planar line field, a local integrating factor exists: equivalently, a smooth nowhere-vanishing one-form is locally proportional to the differential of a function. But finding such a factor generally requires solving \(\mu_x+f\mu_y=-f_y\mu\), whose characteristics satisfy the original ODE itself. Finding an unrestricted integrating factor is therefore not a shortcut. The recognizable solvable classes matter because they expose transformations for which the needed rescaling becomes accessible.

\section{The structural origin of separable and linear equations}

We know that, essentially, first-order equations are easier than higher-order equations, and linear equations are easier to solve than nonlinear ones. This section explains why linear and separable equations are first to appear. The answer is simple: solving them demands only one of the two core actions. 

\begin{proposition}[One-variable integrating factors]\label{prop:factors}
Let \(f\in C^1(I\times J)\), where \(I,J\) are open intervals, and let
\[
\omega=dy-f(x,y)\,dx.
\]
\begin{enumerate}
\renewcommand{\labelenumi}{(\roman{enumi})}
\item \(\omega\) admits a nowhere-vanishing integrating factor \(\mu=\mu(x)\) \textbf{if and only if} \(f(x,y)=A(x)y+B(x)\).
\item \(\omega\) admits a nowhere-vanishing integrating factor \(\mu=\mu(y)\) \textbf{if and only if} \(f(x,y)=a(x)b(y)\), where \(b\) is nowhere zero on \(J\).
\end{enumerate}
\end{proposition}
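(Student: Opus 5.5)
The plan is to reduce both parts to the closedness condition for \(\mu\omega\). With \(\omega=dy-f\,dx\) we have \(\mu\omega=-\mu f\,dx+\mu\,dy\). Since \(I\times J\) is a rectangle, hence simply connected, Section~2 tells us that \(\mu\omega\) is exact on \(I\times J\) if and only if it is closed. In coefficients, closedness reads
\[
\partial_x\mu=-\partial_y(\mu f),\qquad\text{i.e.}\qquad \mu_x+f\mu_y+\mu f_y=0 .
\]
So in each part the task is to decide when this linear PDE has a nowhere-vanishing solution of the prescribed one-variable form. For \(\mu\in C^1\) the product \(\mu f\) is \(C^1\), so the computation is legitimate.

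\textbf{Part (i).} Take \(\mu=\mu(x)\), nowhere zero. The condition becomes \(\mu'(x)=-\mu(x)f_y(x,y)\), that is, \(f_y(x,y)=-\mu'(x)/\mu(x)\), which is independent of \(y\).

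For necessity, set \(A(x):=-\mu'/\mu\), which is continuous. Integrating \(f_y=A(x)\) in \(y\) over the interval \(J\) gives \(f(x,y)=A(x)y+B(x)\) with \(B(x)=f(x,y_0)-A(x)y_0\) for a fixed \(y_0\in J\).

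For sufficiency, if \(f=Ay+B\), then \(A=f_y\) is continuous, and \(\mu(x)=\exp\bigl(-\int_{x_0}^x A\bigr)\) is \(C^1\), positive, and satisfies \(\mu'=-A\mu\). Hence the closedness condition holds.

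\textbf{Part (ii).} Take \(\mu=\mu(y)\), nowhere zero. The condition becomes \(0=\partial_y(\mu(y)f(x,y))\). Thus \(\mu(y)f(x,y)=a(x)\) for some function \(a\) on \(I\), using again that \(J\) is an interval.

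For necessity, \(f=a(x)b(y)\) with \(b:=1/\mu\), which is nowhere zero on \(J\).

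For sufficiency, if \(f=a(x)b(y)\) with \(b\) nowhere zero, choose \(\mu=1/b\). Then \(\mu f=a(x)\), so the condition holds.

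\textbf{Main obstacle: regularity in part (ii).} The sufficiency direction needs \(\mu=1/b\) to be \(C^1\) so that \(d(\mu\omega)\) makes sense. I would handle this as follows.

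If \(a\equiv0\), then \(f\equiv0\), and \(\mu\equiv1\) works with \(f=0\cdot1\).

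Otherwise pick \(x_0\) with \(a(x_0)\neq0\). Then \(b(y)=f(x_0,y)/a(x_0)\) is \(C^1\) in \(y\), so \(1/b\) is \(C^1\) and nowhere zero. Similarly \(a(x)=f(x,y_0)/b(y_0)\) is \(C^1\).

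The same normalization shows that in the necessity direction \(a=\mu f\) is continuous, so the factorization is as regular as \(f\). Finally, an integrating factor only needs to make \(\mu\omega\) closed on the simply connected rectangle, and a potential then exists by the local-to-global remark of Section~2. So no further global issue arises.
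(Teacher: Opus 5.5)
Your proof is correct and follows the paper's argument. Both reduce to the integrating-factor equation \(\mu_x+f\mu_y+f_y\mu=0\), read off \(f_y=-\mu'/\mu\) in case (i) and \(\partial_y(\mu f)=0\) in case (ii), and use \(\mu=\exp(-\int A)\) and \(\mu=1/b\) for the converses. Your extra regularity step fills a small point the paper's proof leaves implicit: you treat \(a\equiv0\) separately and otherwise recover \(b(y)=f(x_0,y)/a(x_0)\in C^1\), so that \(1/b\) is a legitimate \(C^1\) factor.
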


\begin{proof}
The integrating-factor PDE is \(\mu_x+f\mu_y=-f_y\mu\). 

If \(\mu=\mu(x)\), this reduces to \(\frac{\mu'}{\mu}=-f_y\). The left-hand side depends only on \(x\), forcing \(f_y=A(x)\), hence \(f(x,y)=A(x)y+B(x)\). Conversely, if \(f=A(x)y+B(x)\), then
\[
\mu(x)=\exp\!\left(-\int_{x_0}^{x}A(s)\,ds\right)
\]
is nowhere zero and satisfies the integrating-factor equation.

If \(\mu=\mu(y)\), the PDE is \(f\mu'+\mu f_y=0 \implies \frac{\partial}{\partial y}(\mu f)=0\), hence \(\mu(y)f(x,y)=a(x)\), so \(f=a(x)b(y)\) with \(b(y)=1/\mu(y)\). Conversely, if \(f=a(x)b(y)\) and \(b\neq0\) on \(J\), then \(\mu(y)=1/b(y)\) is a nowhere-vanishing integrating factor.
\end{proof}

This elementary characterization links the target directly to part of the familiar taxonomy: the linear and separable classes arise from the simplest coordinate restrictions on \(\mu\). Dependence of an integrating factor on only one coordinate is not an invariant property of the underlying foliation; it is a property relative to a chosen coordinate representation.

From this viewpoint, Bernoulli is a natural first nonlinear class beyond these one-step cases. In the generic Bernoulli reduction, a nonlinear coordinate change \(z=y^{1-n}\) is needed before the one-variable integrating-factor mechanism becomes available. It is therefore a standard early example in which the two structural operations are used in sequence.

\section{The two-floor architecture: A reduction hierarchy}

Instead of treating Exact, Separable, Linear, Homogeneous, and Bernoulli equations as distinct algebraic chapters, they are simply specific configurations of the ``master pair'' \((\Phi, \mu)\). Figures~\ref{fig:twofloors} and~\ref{fig:graph} illustrate this architecture.

\begin{figure}[ht]
\centering
\begin{tikzpicture}[
  box/.style={draw,rounded corners,align=center,inner sep=7pt,font=\small},
  arr/.style={-{Latex[length=2mm]},thick},
  both/.style={{Latex[length=2mm]}-{Latex[length=2mm]},thick}
]
\node[box,text width=0.85\linewidth] (struct) {
\textbf{Structural floor (structural view)}\\[2mm]
\(\omega\ \xrightarrow{\text{coordinate adaptation }\Phi}\ \Phi^*\omega
\ \xrightarrow{\text{nonvanishing rescaling }\mu}\ dG
\ \xrightarrow{\text{integration}}\ G=C\)};

\node[box,text width=0.85\linewidth,below=18mm of struct] (oper) {
\textbf{Operational floor (executable realization)}\\[2mm]
\(\begin{array}{c}
\text{homogeneous}\to\text{separable}\to\text{exact},\qquad
\text{Bernoulli}\to\text{linear}\to\text{exact},\\[1mm]
\text{Riccati}+y_p\to\text{linear}
\end{array}\)};

\draw[both] (struct.south) -- node[right,align=left] {recognize \& justify\\execute \& interpret} (oper.north);
\end{tikzpicture}
\caption{The two-floor reduction architecture. The upper floor records the target and the mathematical type of each move; the lower floor records the named classes and the executable reductions that realize those moves. AI or CAS may assist with selected operational steps, but recognition, justification, domain control, and interpretation remain mathematical tasks across both floors.}
\label{fig:twofloors}
\end{figure}

For the classes considered here, the reduction graph can be summarized by specifying the \((\Phi,\mu)\) data that lead to \(\mu\Phi^*\omega=dG\). When an arrow is displayed at the level of a normalized explicit equation, normalization of the coefficient of the new dependent differential is understood as part of the indicated nonvanishing rescaling.

\begin{itemize}
    \item \textbf{Exact Equations:} The terminal node. \(\Phi = \mathrm{id}\), \(\mu = 1\).
    \item \textbf{Separable (\(y'=a(x)b(y)\)):} Requires no pullback (\(\Phi = \mathrm{id}\)), but utilizes a dependent-variable rescaling \(\mu(y) = 1/b(y)\) (as dictated by Proposition~\ref{prop:factors}). \emph{Structural audit:} if \(b(c)=0\), division removes the equilibrium solution \(y\equiv c\), which must be restored.
    \item \textbf{Linear (\(y'+p(x)y=q(x)\)):} Requires no pullback (\(\Phi = \mathrm{id}\)), but utilizes an independent-variable rescaling \(\mu(x) = \exp(\int p(x)\,dx)\).
    \item \textbf{Homogeneous (\(y'=H(y/x)\)):} On \(x\neq0\), use \(\Phi(x,v)=(x,xv)\), where \(v=y/x\). Then
    \[
    \frac{1}{x[H(v)-v]}\,\Phi^*\omega
    =\frac{dv}{H(v)-v}-\frac{dx}{x},\qquad H(v)\neq v.
    \]
    Thus the coordinate change exposes a separable equation and the subsequent rescaling reaches exactness. Roots \(H(c)=c\) give the straight-line solutions \(y=cx\), which must be restored after division.
    \item \textbf{Bernoulli (\(y'+p(x)y=q(x)y^n\), \(n\neq1\)):} Work on a real branch where \(z=y^{1-n}\) is smooth and locally invertible, and let \(\Phi(x,z)=(x,\psi(z))\), where \(\psi\) is the local inverse. After the corresponding normalization, the equation reaches the Linear node, where an \(x\)-dependent integrating factor completes the reduction.
    \item \textbf{Riccati (with known \(y_p\)):} A translation and inversion pullback \(\Phi(x,u)=(x, y_p + 1/u)\) acts as a bridge directly to the Linear node.
\end{itemize}

\textbf{Structural note: excluded branches.} In elementary examples, many exceptional solutions---often called ``singular'' solutions in classroom practice---appear precisely on sets excluded by a division or by a noninvertible substitution. The structural floor makes the recovery of such lost branches part of the reduction itself rather than an afterthought. In this architecture, branch recovery is therefore not a supplementary warning attached to a solved problem; it is one of the mathematical obligations of the reduction.

\begin{figure}[ht]
\centering
\resizebox{0.98\linewidth}{!}{%
\begin{tikzpicture}[
node distance=11mm and 18mm,
every node/.style={draw,rounded corners,align=center,inner sep=5pt},
arr/.style={-{Latex[length=2mm]},thick},
lab/.style={draw=none,fill=white,inner sep=1pt,font=\scriptsize}
]
\node (exact) {Exact};
\node (quad) [right=of exact] {Reconstruct \(F\)\\by integration};
\node (sep) [above left=of exact] {Separable};
\node (lin) [below left=of exact] {Linear};
\node (hom) [left=of sep] {Homogeneous};
\node (aut) [above=of hom] {Autonomous};
\node (ber) [left=of lin] {Bernoulli};
\node (ric) [below=of ber] {Riccati\\+\ known solution};

\draw[arr] (exact) -- (quad);
\draw[arr] (sep) -- node[lab,above,sloped] {\(\mu(y)=1/b(y)\)} (exact);
\draw[arr] (lin) -- node[lab,below,sloped] {\(\mu(x)=e^{\int p}\)} (exact);
\draw[arr] (hom) -- node[lab,above,sloped] {\(v=y/x\)} (sep);
\draw[arr] (aut) -- node[lab,above,sloped] {separate} (sep);
\draw[arr] (ber) -- node[lab,below,sloped] {\(z=y^{1-n}\)} (lin);
\draw[arr] (ric) -- node[lab,below,sloped,pos=0.64,yshift=-1pt] {\(y=y_p+1/u\)} (lin);
\end{tikzpicture}%
}
\caption{The condensed reduction graph. Every class displayed here is represented by a path to the Exactness node via coordinate adaptation and/or nonvanishing rescaling.}
\label{fig:graph}
\end{figure}

Figure~\ref{fig:graph} also displays a simple hierarchy of structural depth when read from the outer classes toward the Exact node. Homogeneous, Bernoulli, and Riccati equations (once the required particular solution is known) first need a coordinate or dependent-variable adaptation and then enter a one-step class; separable and linear equations are already one rescaling away from exactness; exact equations sit at the terminal first-order node, after which ordinary integration reconstructs the potential. The point is not to rank algebraic difficulty, but to count how many structural moves separate a given representation from the common target.

\subsection*{Worked example on both floors}
Consider the Bernoulli equation: \(y'+y=e^x y^2\).
On the \emph{operational floor}, the student applies the standard substitution \(z=y^{-1}\), obtains \(z'-z=-e^x\), finds the integrating factor \(e^{-x}\), and integrates. The substitution excludes \(y=0\), so the solution \(y\equiv0\) must be restored separately. 

On the \emph{structural floor}, the logic is transparent and unified. We have \(\omega=dy+(y-e^{x}y^2)\,dx\). 
The pullback \(\Phi(x,z)=(x,1/z)\) exposes the linear form. We then find the combined factor to force exactness:
\[
\boldsymbol{(-z^2e^{-x})\,\Phi^*\omega=d(e^{-x}z+x)=0.}
\]
The sequence of formulas is not merely a recipe attached to a name; it is an explicit realization of \(\omega \xrightarrow{\Phi^*} \Phi^*\omega \xrightarrow{\mu} dG\). 

\subsection*{From execution to structural assessment}
Once the two floors are visible, the same equation supports questions that go beyond naming a method and carrying it out. One may give a proposed coordinate change and ask whether the pullback actually exposes a simpler class; give a candidate multiplier and ask whether it makes the pulled-back form closed; perturb a coefficient and ask whether the same route still works; or delegate a routine integral to CAS or AI while requiring the student to recover excluded branches and verify the resulting first integral in the original variables. These tasks still involve calculation, but the calculation is located inside a common reduction architecture. The division of labor between human and machine may vary; the mathematical target does not.

The diagram can also be traversed backward. Starting from a simple potential \(G\), together with a chosen coordinate map \(\Phi\) and nonvanishing factor \(\mu\), the relation \(\mu\Phi^*\omega=dG\) generates an ODE whose reduction path is known by construction. In the Bernoulli example above, the data \(G=e^{-x}z+x\), \(\Phi(x,z)=(x,1/z)\), and \(\mu=-z^2e^{-x}\) recover the original nonlinear equation. This reverse direction supplies a direct way to design variations: change one piece of the triple \((G,\Phi,\mu)\) and ask which structural features survive.

\section{Lie symmetry: The source of \texorpdfstring{\(\Phi\)}{Phi}}

How can one discover a useful coordinate change \(\Phi\) when it is not algebraically obvious? Lie symmetry theory provides a systematic route when a suitable symmetry is available. This is also the viewpoint emphasized by Yap \cite{Yap2010}: a symmetry can guide us toward coordinates in which the equation becomes easier to solve.

Suppose a one-parameter symmetry generator \(X=\xi\partial_x+\eta\partial_y\) leaves the foliation invariant, so that \(\Lie_X\omega=a\omega\). Locally, where \(X\neq0\), one chooses canonical coordinates \((r,s)\) satisfying
\[
Xr=0,\qquad Xs=1,\qquad dr\wedge ds\neq0.
\]
Here \(r\) is constant along the symmetry orbits, while \(s\) measures motion along them; the condition \(dr\wedge ds\neq0\) says that \((r,s)\) are genuine local coordinates. In these coordinates the symmetry is straightened to \(X=\partial_s\). Thus the passage to \((r,s)\) is exactly a coordinate-adaptation step in our language.

For a transverse symmetry, the link with integrating factors is especially sharp. Suppose \(\Lie_X\omega=a\omega\) and \(\omega(X)\neq0\). Set
\[
 h=\omega(X),\qquad \alpha=\frac{\omega}{h}.
\]
Then \(Xh=(\Lie_X\omega)(X)=ah\), so \(\Lie_X\alpha=0\), while \(\alpha(X)=1\). Cartan's formula gives
\[
0=\Lie_X\alpha=\iota_X d\alpha+d(\alpha(X))=\iota_X d\alpha.
\]
In dimension two, since \(X\neq0\), this implies \(d\alpha=0\). Hence locally
\[
\boldsymbol{\mu=\frac{1}{\omega(X)}}
\]
is an integrating factor. 

For example, the homogeneous equation \(y'=H(y/x)\) is invariant under the dilation symmetry \(X=x\partial_x+y\partial_y\). The canonical coordinates are \(r=y/x\) and \(s=\log|x|\). The standard trick \(v=y/x\) is not magic; it is precisely a natural symmetry-adapted coordinate for the dilation action (Figure~\ref{fig:lie}).

\begin{figure}[ht]
\centering
\begin{minipage}{0.46\textwidth}
\centering
\begin{tikzpicture}[scale=1.05]
\draw[->] (-0.2,0) -- (3.6,0) node[right] {\(x\)};
\draw[->] (0,-0.2) -- (0,3.1) node[above] {\(y\)};
\foreach \m in {0.4,0.8,1.2}{
  \draw[dashed] (0,0) -- (2.45,{2.45*\m});
}
\draw[very thick,domain=0.55:3.05,samples=80]
  plot (\x,{\x*(0.55+0.20*ln(\x))});
\node[align=left] at (2.15,2.75) {\scriptsize dashed: dilation orbits\\[-1pt]\scriptsize solid: \(y=x(0.55+\tfrac15\log x)\)};
\end{tikzpicture}
\end{minipage}
\hfill
{\Large\(\longmapsto\)}
\hfill
\begin{minipage}{0.46\textwidth}
\centering
\begin{tikzpicture}[scale=1.05]
\draw[->] (-1.2,0) -- (2.5,0) node[right] {\(s=\log|x|\)};
\draw[->] (0,-0.2) -- (0,2.4) node[above] {\(r=y/x\)};
\foreach \r in {0.4,0.8,1.2}{
  \draw[dashed] (-1.1,\r) -- (2.25,\r);
}
\draw[very thick,domain=-0.8:2.0,samples=80]
  plot (\x,{0.55+0.20*\x});
\node[align=left,anchor=west] at (0.62,2.08) {\scriptsize \(X=\partial_s\)\\[-1pt]\scriptsize symmetry orbits: \(r=\text{const}\)};
\end{tikzpicture}
\end{minipage}
\caption{An explicit coordinate adaptation for \(y'=y/x+1/5\) on \(x>0\). The curved solution family \(y=x(0.55+\tfrac15\log x)\) is pulled back via \(\Phi\) to a family of straight lines \(r=0.55+s/5\).}
\label{fig:lie}
\end{figure}
\FloatBarrier

\section{A vector-valued echo: Constant-coefficient systems}

The same reduction logic has a useful vector-valued echo. Let \(A\in\mathbb R^{n\times n}\) be constant and \(\mathbf g:I\to\mathbb R^n\) continuous. Consider
\[
\mathbf{x}'=A\mathbf{x}+\mathbf{g}(t).
\]
Introduce the time-dependent linear change of variables
\[
\boldsymbol{\mathbf{z}(t)=e^{-At}\mathbf{x}(t).}
\]
Differentiating this yields \(\mathbf{z}' = -Ae^{-At}\mathbf{x}+e^{-At}\mathbf{x}' = e^{-At}\mathbf{g}(t)\). 

The system has been transformed directly to the atomic vector-valued direct-integration state: \(d\mathbf{z}=e^{-At}\mathbf{g}(t)\,dt\). In the homogeneous case, \(\mathbf{z}'=0\). This is a vector-valued echo of the same reduction logic, not an application of the planar scalar integrating-factor theorem: the invertible matrix \(e^{-At}\) removes the homogeneous dynamics and leaves direct integration \cite{Boyce2017,Teschl2012}.

\section{From Taxonomy to Structural Assessment}

The architecture becomes pedagogically more consequential when it changes not only how familiar methods are explained, but also the kinds of problems one can pose. The operational surface of two problems may look completely different---homogeneous substitution, Bernoulli reduction, a hidden composite coordinate, or an integrating factor---while the structural grammar remains
\[
\boxed{\omega\ \longrightarrow\ \Phi^*\omega\ \longrightarrow\ \mu\Phi^*\omega=dG\ \longrightarrow\ G=C.}
\]
This makes it possible to design a progression in which students first manipulate the structural moves, then reinterpret the standard named classes through the same grammar, and finally confront equations for which a familiar classification, even when available, does not by itself reveal the most useful executable reduction.

\subsection*{Reverse construction}
The same diagram can be used in the opposite direction. Let \(\Phi:V\to U\) be a smooth diffeomorphism between coordinate patches, let \(\mu\) be a smooth nowhere-zero function on \(V\), and let \(G\) be smooth with \(dG\neq0\) on the regular patch under consideration. Define a one-form on \(U\) by
\[
\omega=(\Phi^{-1})^*\!\left(\frac{1}{\mu}\,dG\right).
\]
Then, by construction,
\[
\mu\Phi^*\omega=dG.
\]
Thus the data \((G,\Phi,\mu)\) generate a regular Pfaffian equation whose reduction path is known in advance. An explicit representation \(y'=f(x,y)\) requires, in addition, that the coefficient of \(dy\) be nonzero on the chosen patch.

The construction data are not unique, and this nonuniqueness is itself informative. With \(G\) and \(\Phi\) fixed, changing a nowhere-zero \(\mu\) changes the representative but not its solution foliation. Likewise, replacing \(G\) by \(h(G)\) and \(\mu\) by \(h'(G)\mu\), with \(h'\neq0\), leaves the constructed \(\omega\) unchanged. By contrast, changing the potential or the coordinate map can change the foliation or its presentation. The point is not that the pullback identity is new, but that within the present architecture it becomes a direct mechanism for designing examples and asking precisely what a change of representation preserves.

\subsection*{Comparing reduction paths}
A given equation may also admit more than one useful route. The relevant question is then not merely whether a substitution works, but what it exposes: does it produce a homogeneous or inhomogeneous linear equation, does it shorten the remaining calculation, and what domain restrictions does it introduce? Appendix~\ref{app:lab} includes a concrete comparison of this kind. It also develops a compact exercise progression involving reverse construction, lost branches, hidden coordinates, representation comparison, and a case in which the structural reduction is complete even though the remaining quadrature is not elementary. The sequence is expository rather than empirical: it demonstrates concretely the kinds of mathematical questions the architecture supports, while leaving learning outcomes and systematic discovery as separate questions.

\section{Beyond First Order: First Integrals and Structural Targets}

The point of this section is not to extend the formula \(\mu\Phi^*\omega=dG\) verbatim to every higher-order equation. It is to preserve the organizing logic that made the first-order picture useful: identify a privileged target, understand the structural move that reaches it, and audit what is gained or lost in the passage. In first order, exactness is terminal because the first integral \(G=C\) leaves no differential order to solve. In higher order, the analogous event is usually intermediate: a first integral produces a lower-order relation from which the reduction can continue.

Thus the first-order picture
\[
\boxed{\text{equation}\;\longrightarrow\;\text{exact representative}\;\longrightarrow\;G=C}
\]
becomes, on the order-reduction branch,
\[
\boxed{\text{higher-order equation}\;\longrightarrow\;\text{first integral}\;\longrightarrow\;\text{lower-order problem}}.
\]
The examples below are chosen to show this connection repeatedly rather than merely by analogy. Multiplication may expose a total derivative, a known solution may expose a first-order equation, and operator factorization may expose successive first-order stages. A second branch, represented by Sturm--Liouville normalization, has a different target: it does not lower the order, but moves the equation into a structural class from which another theory becomes available.

The point is not that every higher-order ODE admits such reductions, nor that there is a universal algorithm for finding them. When a first-integral relation
\[
I\bigl(x,y,\ldots,y^{(n-1)}\bigr)=C
\]
is used to solve explicitly for the highest remaining derivative, this is a local nondegenerate step; for example, one may require \(I_{y^{(n-1)}}\neq0\) on the branch under consideration. Otherwise the relation may still be useful, but singular branches require separate analysis.

\subsection*{A first integral as order reduction}
A familiar example already contains the whole idea. If
\[
y''=F(y),
\]
then multiplication by \(y'\) gives
\[
y'y''=F(y)y'
=\frac{d}{dx}\!\left(\int^y F(\eta)\,d\eta\right),
\]
so
\[
\frac{d}{dx}\left(\frac12(y')^2-\int^y F(\eta)\,d\eta\right)=0.
\]
The resulting first integral
\[
\frac12(y')^2-\int^y F(\eta)\,d\eta=C
\]
is a first-order relation. In elementary mechanics this is read as an energy integral; structurally, it is a one-step reduction of order. The connection with the first-order core is direct: a multiplier has converted the differential equation into a total derivative, and the computational target is again a conserved quantity. The difference is equally important---here the multiplier is part of an order-reduction step rather than a nowhere-vanishing rescaling that preserves a planar line field. Here the multiplier is \(y'\), which need not be nonzero. Every solution of the original equation satisfies the energy relation, but differentiating the relation recovers \(y''=F(y)\) directly only on branches where \(y'\neq0\). Turning points and constant candidates must therefore be checked in the original equation. This is different from the nowhere-vanishing rescalings used in the first-order core.

\subsection*{A known solution of a second-order linear equation}
Consider
\[
y''+p(x)y'+q(x)y=0
\]
and suppose a nowhere-vanishing solution \(y_1\) is known on the interval under consideration. Writing \(y=y_1v\) and using the equation satisfied by \(y_1\) yields
\[
v''+\left(2\frac{y_1'}{y_1}+p\right)v'=0.
\]
With \(w=v'\), the second-order problem has become the first-order linear equation
\[
w'+\left(2\frac{y_1'}{y_1}+p\right)w=0,
\]
whose integrating factor is
\[
y_1^2 e^{\int p(x)\,dx}.
\]
Thus the standard reduction-of-order construction is itself a two-stage reduction: a substitution exposes a first-order equation, and a multiplier completes that reduced step. In the language of the first-order architecture, \(y=y_1v\) plays the role of coordinate adaptation: it does not solve the equation, but exposes a representation in which a familiar one-variable integrating factor becomes available. Reconstruction is then explicit:
\[
v(x)=C_1+\int^x w(s)\,ds,
\]
so \(w\equiv0\) recovers the multiples of the known solution \(y_1\). The Wronskian/Abel formula is another expression of the same mechanism.

\subsection*{Constant coefficients as operator factorization}
For a constant-coefficient equation
\[
(D^2-(r_1+r_2)D+r_1r_2)y=0,
\qquad D=\frac{d}{dx},
\]
one has
\[
(D-r_1)(D-r_2)y=0.
\]
Setting \(z=(D-r_2)y\) produces
\[
(D-r_1)z=0,
\]
followed by the first-order reconstruction equation
\[
(D-r_2)y=z.
\]
The usual characteristic-polynomial method can therefore be read as a factorization of the second-order operator into first-order reduction stages. Here factorization plays the organizing role that coordinate adaptation played in first order: it exposes an intermediate variable governed by a simpler target equation, after which reconstruction returns to \(y\). Repeated and complex roots modify the algebra but not this structural reading.

\subsection*{General factorization and the Riccati bridge}
For a general monic second-order operator
\[
L=D^2+p(x)D+q(x),
\]
a factorization
\[
L=(D+a)(D+b)
\]
requires
\[
a+b=p,
\qquad
b'+ab=q.
\]
Eliminating \(a=p-b\) gives the Riccati equation
\[
b'+pb-b^2=q.
\]
Equivalently, if a nowhere-vanishing solution \(y_1\) of \(Ly=0\) is known and
\[
w=\frac{y_1'}{y_1},
\]
then
\[
w'+w^2+pw+q=0
\]
and
\[
\boxed{L=(D+p+w)(D-w).}
\]
This gives a precise sense in which ``knowing one solution'' and ``factoring the operator'' are two faces of the same order-reduction mechanism. The conceptual link to the main architecture is again target-centered: the factorization is useful because it manufactures an intermediate first-order object whose conserved quantity can be reached explicitly. The factorization also produces a first integral directly. If
\[
L=(D+a)(D+b),
\]
then
\[
\frac{d}{dx}\left[e^{\int a}(y'+by)\right]
=e^{\int a}Ly.
\]
Hence a solution of \(Ly=0\) first reaches the conserved quantity \(e^{\int a}(y'+by)=C\), followed by a first-order reconstruction equation. For the factorization generated by a known zero-free solution \(y_1\), this becomes
\[
I=e^{\int p}(y_1y'-y_1'y),
\qquad
I'=e^{\int p}y_1Ly.
\]
This identity makes the link among factorization, reduction of order, and the Wronskian first integral explicit.

\subsection*{Sturm--Liouville form as structural normalization}
A different kind of target appears, for real-valued coefficients \(P,Q,R\), in
\[
y''+P(x)y'+\bigl(Q(x)+\lambda R(x)\bigr)y=0.
\]
With
\[
\rho(x)=e^{\int P(x)\,dx},
\]
multiplication gives
\[
(\rho y')'+\rho\bigl(Q+\lambda R\bigr)y=0.
\]
Equivalently,
\[
-(\rho y')'-\rho Qy=\lambda\,\rho R\,y.
\]
After the usual relabeling of coefficients and signs, this is a Sturm--Liouville-type divergence form. The multiplier has not solved the equation; it has moved the differential expression into a structurally privileged, formally self-adjoint form. For a student coming from linear algebra, the analogy is useful: a Hermitian matrix is valuable not because its eigenvalue problem has already been solved, but because self-adjoint structure unlocks orthogonality and spectral decomposition. The Sturm--Liouville multiplier plays an analogous preparatory role in function space. It is not literally a change of basis, and it does not solve the ODE; rather, it exposes the self-adjoint structure to which spectral theory can be applied once the operator is properly realized.

A self-adjoint operator realization requires additional data and hypotheses---for example a suitable positive weight \(\rho R\) in the standard spectral setting, boundary conditions, and a specified function-space domain. With those qualifications, spectral and variational tools become available. This is the same target-centered philosophy with a different target: not order reduction, but a representation in which the next layer of theory becomes accessible.

\subsection*{Two kinds of higher-order target}
The first-integral branch of the extended picture can be summarized as
\[
E_n
\longrightarrow
I_{n-1}=C_1
\longrightarrow
I_{n-2}=C_2
\longrightarrow\cdots,
\]
whenever the relevant nondegenerate reductions exist. Here \(E_n[y]=0\) denotes an \(n\)-th order equation, while \(I_{n-1}(x,y,\ldots,y^{(n-1)})=C_1\) denotes a first integral involving derivatives only up to order \(n-1\). If that relation is nondegenerate in its highest derivative, it can be read locally as an \(n-1\)-st order problem; a second independent reduction may then produce \(I_{n-2}=C_2\), and so on. The diagram is therefore a schematic record of successive loss of differential order, not a claim that such a chain always exists or is algorithmically discoverable. For systems, several independent first integrals play the analogous role of confining trajectories to intersections of level sets, thereby reducing the accessible dimension rather than simply the scalar differential order.

Sturm--Liouville normalization belongs to an adjacent but different branch of the same target-centered viewpoint. It does not lower the order by producing a first integral; it moves the equation to a structurally privileged representation from which another body of theory becomes available. The present paper does not develop a general higher-order theory. It records these two continuations only to show how the first-order idea of a computational target can persist beyond the terminal exactness case.

\section{Connecting the dots: Pedagogy and prior work}

The individual ingredients of the preceding analysis---integrating factors, canonical coordinates, symmetry, and exact forms---have deep historical roots. Arnold \cite{Arnold1992} emphasized the geometric content of integration methods. Starrett \cite{Starrett2007}, Hydon \cite{Hydon2000}, Ibragimov \cite{Ibragimov1999}, and Olver \cite{Olver1993} have thoroughly mapped the connections between Lie groups, canonical coordinates, and classical reductions. Yap \cite{Yap2010} highlighted coordinate transformations as a unifying theme. Standard texts by Teschl \cite{Teschl2012} and Boyce, DiPrima, and Meade \cite{Boyce2017} detail the classical solvable classes. On the educational front, Ferzola \cite{Ferzola1994} described an early implementation of computer algebra in an undergraduate ODE course, and Rezvanifard et al.\ \cite{Rezvanifard2023} studied conceptual challenges surrounding exactness. 

Our contribution does not claim to invent these ingredients. The contribution is the particular target-centered architecture obtained by putting them together. Exact representatives form the terminal computational node; coordinate adaptation and rescaling have different roles; the standard named classes become executable paths to the target; and Proposition~\ref{prop:factors} explains why the linear and separable classes arise from the simplest coordinate restrictions on the integrating factor. The local target \(\mathbf{\mu\Phi^*\omega=dG}\) separates the \emph{mathematical intent} from the \emph{algebraic execution}.

Several of the cited sources anticipate particular activities that are brought together and developed here. Starrett uses coordinate changes to simplify opaque first-order equations, works backward from prescribed families of curves, and treats reduction to quadrature as a satisfactory endpoint \cite{Starrett2007}. Teschl's \emph{Graduate Studies in Mathematics} text gives a compact treatment of exact equations and integrating factors in its early explicit-solutions chapter: its exercises move between level-set descriptions and differential equations and explicitly note that unrestricted integrating-factor discovery is generally as hard as solving the original equation \cite{Teschl2012}. This places Teschl close to the present paper in mathematical family, but the emphasis is different. His discussion is one component of a broad ODE and dynamical-systems text; the present paper makes the exact representative the organizing target of a first-course computational architecture and separates the structural and operational floors throughout.

Rezvanifard et al. use puzzle tasks that ask students to audit cancellation and exactness and to reconcile different implicit descriptions of the same solution family \cite{Rezvanifard2023}. For second-order equations, Cheb-Terrab and Roche study integrating factors as total-derivative targets and derive families of equations from prescribed integrating-factor data \cite{ChebTerrab1999}. These are genuine precedents for individual moves. The present synthesis does not depend on denying that proximity; its claim is organizational. It combines target, coordinate adaptation, rescaling, construction, comparison, and auditing within one first-course reduction architecture.

\section{Conclusion: The role of the architect in the AI era}

The reduction graph concerns regular branches of symbolic first-order equations. It does not replace existence and uniqueness theory, numerical methods, or stability analysis. It is not a decision procedure for arbitrary first-order ODEs, and reduction to quadrature does not imply the existence of an elementary antiderivative. Within its scope, however, it reveals a clear hierarchy.

The architecture has a direct curricular implication in an AI-assisted environment. Selected routine symbolic steps may be delegated, while recognition, justification, domain control, verification, and interpretation become natural objects of instruction and assessment. One can ask, for example: \emph{What is the correct coordinate change? Why is it admissible? Which integrating factor completes the passage to an exact representative? What branches were lost during the transformation? How is the resulting computation verified in the original equation?} This is an expository and curricular proposal; questions of comparative learning outcomes are separate empirical questions.

Teach not only the formula attached to each solvable class, but the computational state to which that formula moves the equation. The resulting course asks students to act not merely as executors of symbolic routines, but as architects of the reduction.

The extended material develops the architecture in three directions without changing that conclusion. Reverse construction turns \((G,\Phi,\mu)\) into a problem-design mechanism; the structural laboratory uses the same grammar to compare representations and reduction paths and to audit what transformations preserve or exclude; and the higher-order outlook shows how first integrals, factorization, and structurally privileged forms continue the target-centered viewpoint beyond the terminal first-order case. The purpose is not to make the syllabus larger, but to make the continuity between its methods visible.

More broadly, this case study suggests a question that may be worth asking in other mathematically structured courses: whether an operational principle, supported by a clear theoretical backbone, can reorganize apparently separate techniques into a coherent mathematical practice. Any such reorganization would have to be justified from the mathematics of the particular subject; the present paper offers only one concrete instance of that possibility.

\appendix
\section{A Structural Exercise Laboratory: From Guided Moves to Unlabeled Equations}
\label{app:lab}

This appendix is designed as a proof of concept for the pedagogical claim of the paper. The exercises deliberately move through three stages. First, students manipulate coordinate changes, multipliers, and potentials before the usual method names carry the burden of recognition. Second, familiar classes are revisited on two floors: the operational calculation and the structural reason that the calculation works. Finally, the labels are removed and the surface forms become less transparent. The purpose of the last stage is to compare substantially different representations that nevertheless lead to the same kind of executable reduction,
\[
\boxed{\omega\xrightarrow{\Phi^*}\Phi^*\omega
\xrightarrow{\mu}dG\longrightarrow G=C.}
\]
The same checklist accompanies every surface form: Is the coordinate change locally admissible? Is the multiplier nonvanishing on the chosen branch? What was divided out or excluded? Does the proposed first integral differentiate back to the transformed equation? These questions belong to the structural floor even when the algebra on the operational floor is delegated.

\subsection{Stage I: Learn the moves before naming the method}

\paragraph{Exercise A.1 --- Constructing an equation from a simple conserved quantity.}
Work on the patch $x>0$. In coordinates $(x,v)$ let
\[
G(x,v)=\log x-\frac12v^2,
\qquad y=xv,
\qquad \mu=x^{-3}.
\]
Do not begin by classifying an ODE. Instead:
\begin{enumerate}[label=(\alph*)]
  \item Compute $dG$ and then the one-form $\widetilde\omega$ determined by $\mu\widetilde\omega=dG$.
  \item Rewrite $\widetilde\omega$ in the original variables $(x,y)$ using $v=y/x$ and $dy=v\,dx+x\,dv$.
  \item Show that the resulting form is
  \[
  \omega=(x^2+y^2)\,dx-xy\,dy.
  \]
  \item Now traverse the construction forward and recover the first integral.
\end{enumerate}

\begin{architectnote}[title={Structural note: backward traversal}]
The student has just traversed the reduction diagram backward before being asked to name what was done. A simple potential, a coordinate map, and a nonvanishing rescaling generate a surface form whose solution route is known by construction. This is the same architecture used later for solving, now used as a problem-design mechanism. On \(x>0\), the resulting Pfaffian form remains regular even at \(y=0\); writing it as an explicit equation \(y'=f(x,y)\) additionally requires \(y\neq0\), since the coefficient of \(dy\) vanishes there.
\end{architectnote}

\paragraph{Exercise A.2 --- What rescaling changes, and what it cannot change.}
For
\[
\omega=dy-y\,dx,
\qquad \mu(x)=e^{-x},
\]
verify directly that
\[
\mu\omega=d(e^{-x}y).
\]
Then answer two separate questions: why do $\omega=0$ and $\mu\omega=0$ have exactly the same unparameterized solution curves, and what new information becomes explicit once the representative is exact? Explain why multiplication or division by a nowhere-vanishing function preserves the solution curves on the common domain, and why excluding zeros of a divisor requires a separate check for omitted solutions.

\subsection{Stage II: Standard methods on two floors}

\paragraph{Exercise A.3 --- Homogeneous reduction with an actual lost branch.}
On $x\neq0$, consider
\[
y'=\left(\frac{y}{x}\right)^2+\frac{y}{x}-1.
\]
\begin{enumerate}[label=(\alph*)]
  \item Set $v=y/x$ and show that the transformed equation is
  \[xv'=v^2-1.\]
  \item On a branch where $v^2\neq1$, find a nonvanishing rescaling that makes the transformed one-form exact and obtain a first integral.
  \item Identify the branches removed by division and restore them in the original variables.
  \item State separately what happened on the structural floor and what happened on the operational floor.
\end{enumerate}

\paragraph{Exercise A.4 --- Bernoulli without treating the substitution as magic.}
Consider
\[
y'+y=e^x y^2.
\]
On a branch where $y\neq0$, use $z=y^{-1}$.
\begin{enumerate}[label=(\alph*)]
  \item Derive the pulled-back form, normalize the coefficient of $dz$, and obtain the resulting linear equation together with its $x$-dependent integrating factor.
  \item Combine the normalization and the linear integrating factor into a single nonvanishing rescaling relative to $\Phi^*\omega$, and exhibit $G$ such that $\mu\Phi^*\omega=dG$.
  \item Recover every solution of the original equation, including any branch excluded by the coordinate change.
  \item Explain which part of the argument would remain a mathematical obligation even if a CAS carried out all differentiation and integration correctly.
\end{enumerate}

\begin{architectnote}[title={One grammar, different operational surfaces}]
Exercises A.3 and A.4 are usually filed under different chapter headings. Here their operational paths differ, but the structural questions do not: choose an admissible coordinate, expose a simpler representative, rescale to an exact form, and audit what the reduction discarded.
\end{architectnote}

\subsection{Stage III: Unlabeled equations and representation comparison}

The next equations are not introduced by method name. The task is to search for a coordinate that compresses repeated algebraic structure. A successful choice should not merely simplify notation; after pullback it should expose a one-variable multiplier or an immediately separable representative. The examples are deliberately constructed and partly guided, so the emphasis is on exposing and comparing reductions rather than on a general discovery procedure.

\begin{challengebox}[title={Challenge A.5: a hidden diagonal coordinate}]
Solve
\[
dy+\bigl[1-x\bigl(1+(x+y)^2\bigr)\bigr]\,dx=0.
\]
Do not begin with a named-class test. Find a repeated functional block that can serve as a new coordinate. Justify local admissibility, perform the pullback, determine the rescaling, and produce a first integral. Then verify the result in the original variables.
\end{challengebox}

The structural route is exceptionally short once the coordinate is seen: with $u=x+y$,
\[
du-x(1+u^2)\,dx=0,
\qquad
\frac{du}{1+u^2}-x\,dx
=d\left(\arctan u-\frac{x^2}{2}\right).
\]
Thus the apparently nonstandard equation has the first integral
\[
\arctan(x+y)-\frac{x^2}{2}=C.
\]
Figure~\ref{fig:app-diagonal} shows the same family before and after the structural coordinate is exposed.

\begin{figure}[ht]
\centering
\begin{minipage}{0.47\textwidth}
\centering
\begin{tikzpicture}
\begin{axis}[
 width=\linewidth,height=5.0cm,
 axis lines=middle,xmin=-1.05,xmax=1.05,ymin=-2.2,ymax=2.2,
 samples=120,domain=-1:1,
 title={Original coordinates $(x,y)$},
 xtick={-1,0,1},ytick={-2,0,2},clip=true]
\addplot[thick] {tan(deg(-0.45+x^2/2))-x};
\addplot[thick,dashed] {tan(deg(x^2/2))-x};
\addplot[thick,densely dotted] {tan(deg(0.45+x^2/2))-x};
\end{axis}
\end{tikzpicture}
\end{minipage}
\hfill
\begin{minipage}{0.47\textwidth}
\centering
\begin{tikzpicture}
\begin{axis}[
 width=\linewidth,height=5.0cm,
 axis lines=middle,xmin=-1.05,xmax=1.05,ymin=-1.6,ymax=2.2,
 samples=120,domain=-1:1,
 title={Adapted coordinate $u=x+y$},
 xtick={-1,0,1},ytick={-1,0,1,2},clip=true]
\addplot[thick] {tan(deg(-0.45+x^2/2))};
\addplot[thick,dashed] {tan(deg(x^2/2))};
\addplot[thick,densely dotted] {tan(deg(0.45+x^2/2))};
\end{axis}
\end{tikzpicture}
\end{minipage}
\caption{A hard-looking surface form and the same solution family after the coordinate $u=x+y$ exposes the separable structure. The three curves correspond to three values of the same first integral $G=\arctan u-x^2/2$.}
\label{fig:app-diagonal}
\end{figure}

\begin{challengebox}[title={Challenge A.6: a hidden composite coordinate}]
Solve
\[
e^y\,dy+\bigl(2x-xe^y-x^3\bigr)\,dx=0.
\]
Find a single new dependent coordinate built from the repeated pieces of the one-form. Your coordinate should turn the equation into a first-order linear equation before any integrating factor is applied. Complete the reduction and write the solution as a level-set relation.
\end{challengebox}

A useful coordinate is $v=e^y+x^2$, for which
\[
dv=e^y\,dy+2x\,dx,
\qquad
dv-xv\,dx=0.
\]
The multiplier $e^{-x^2/2}$ then yields
\[
d\left(e^{-x^2/2}v\right)=0,
\qquad
\boxed{(e^y+x^2)e^{-x^2/2}=C.}
\]
Recovering \(y\) from \(v\) requires
\[
Ce^{x^2/2}>x^2,
\]
so that \(y=\log\bigl(Ce^{x^2/2}-x^2\bigr)\) is defined on the chosen real branch. The coordinate \(v\) exposes a homogeneous linear equation directly.

There is also a useful comparison of reduction paths. If one begins only with \(u=e^y\), then
\[
u'-xu=x^3-2x,
\]
whereas the adapted choice \(v=e^y+x^2\) gives immediately
\[
v'-xv=0.
\]
Here \(u_p=-x^2\) is a particular solution of the inhomogeneous equation for \(u\), and \(v=u-u_p\). Thus the second coordinate packages the familiar translation by a particular solution into the coordinate choice itself. Both routes are valid, but the second exposes more of the reduction at once; in a general linear equation, of course, finding such a particular solution may carry the original difficulty. Notice also that the reduced solution \(v\equiv0\) lies outside the image \(v=e^y+x^2>x^2\), so it does not produce an additional real solution of the original equation.

\begin{challengebox}[title={Challenge A.7: structural success without an elementary quadrature}]
On the patch $x>0$, $y>0$, solve structurally
\[
\frac{dy}{y}
+\left(x\log\frac{y}{x}-1-\frac1x\right)dx=0.
\]
Identify a natural dimensionless coordinate, reduce the equation to a standard first-order form, and stop only after a verified first integral has been produced. Do not treat a non-elementary antiderivative as a failure of the reduction.
\end{challengebox}

With $v=\log(y/x)$ one has $dv=dy/y-dx/x$, and the equation becomes
\[
dv+(xv-1)\,dx=0.
\]
Hence
\[
d\left(e^{x^2/2}v\right)-e^{x^2/2}\,dx=0,
\]
so a valid first integral is
\[
G(x,y)=e^{x^2/2}\log\frac{y}{x}
-\int_{x_0}^{x}e^{s^2/2}\,ds.
\]
The structural problem is finished although the remaining quadrature is not elementary.

\begin{challengebox}[title={Challenge A.8: a deliberately hostile surface form}]
On a coordinate patch avoiding $x=\pm1$ and $1+xy=0$, consider
\[
(1-x^2)\,dy
-\bigl(x^3y^2+x^3+4x^2y+xy^2+x+y^2-1\bigr)\,dx=0.
\]
Recognizing the Riccati form does not by itself supply the reduction exhibited below. Look for a fractional-linear coordinate built from $x+y$ and $1+xy$. Show that, after the correct coordinate adaptation and a nonvanishing rescaling, the equation collapses to the same separable target as Challenge A.5. Find the first integral and state the coordinate patch on which your argument is valid.
\end{challengebox}

Set
\[
v=\frac{x+y}{1+xy}.
\]
A direct calculation gives
\[
dv=\frac{(1-y^2)\,dx+(1-x^2)\,dy}{(1+xy)^2},
\]
and the displayed equation is equivalent, after division by the nonvanishing factor $(1+xy)^2$, to
\[
dv-x(1+v^2)\,dx=0.
\]
Therefore
\[
\boxed{
\arctan\!\left(\frac{x+y}{1+xy}\right)-\frac{x^2}{2}=C.}
\]
The local coordinate condition is $\partial v/\partial y=(1-x^2)/(1+xy)^2\neq0$, hence the stated restrictions. If the equations in Challenges A.5 and A.8 are solved explicitly for \(y'\), both can be placed in the Riccati family. That classification is compatible with the present point: the label describes the equation, while the adapted coordinate reveals the executable reduction. The pair therefore compares representations; it does not provide a general procedure for discovering such coordinates.

\begin{figure}[ht]
\centering
\begin{minipage}{0.47\textwidth}
\centering
\begin{tikzpicture}
\begin{axis}[
 width=\linewidth,height=5.0cm,
 axis lines=middle,xmin=-0.72,xmax=0.72,ymin=-2.0,ymax=2.0,
 samples=140,domain=-0.68:0.68,
 title={Hostile surface form in $(x,y)$},
 xtick={-0.5,0,0.5},ytick={-1,0,1},clip=true]
\addplot[thick] {(tan(deg(-0.42+x^2/2))-x)/(1-x*tan(deg(-0.42+x^2/2)))};
\addplot[thick,dashed] {(tan(deg(x^2/2))-x)/(1-x*tan(deg(x^2/2)))};
\addplot[thick,densely dotted] {(tan(deg(0.42+x^2/2))-x)/(1-x*tan(deg(0.42+x^2/2)))};
\end{axis}
\end{tikzpicture}
\end{minipage}
\hfill
\begin{minipage}{0.47\textwidth}
\centering
\begin{tikzpicture}
\begin{axis}[
 width=\linewidth,height=5.0cm,
 axis lines=middle,xmin=-0.72,xmax=0.72,ymin=-1.4,ymax=1.6,
 samples=140,domain=-0.68:0.68,
 title={Adapted coordinate $v=(x+y)/(1+xy)$},
 xtick={-0.5,0,0.5},ytick={-1,0,1},clip=true]
\addplot[thick] {tan(deg(-0.42+x^2/2))};
\addplot[thick,dashed] {tan(deg(x^2/2))};
\addplot[thick,densely dotted] {tan(deg(0.42+x^2/2))};
\end{axis}
\end{tikzpicture}
\end{minipage}
\caption{The visual point of coordinate adaptation. In the original variables the level curves are algebraically opaque; the fractional-linear coordinate reveals the elementary separable family $\arctan v-x^2/2=C$. The operational appearance changes drastically while the structural target is unchanged.}
\label{fig:app-mobius}
\end{figure}
\FloatBarrier

\subsection{What the sequence is intended to reveal}

The sequence is arranged so that the surface methods become progressively less useful as labels while the structural questions remain stable. Exercise A.1 teaches the diagram backward; Exercises A.3--A.4 connect the two floors on familiar territory; Challenge A.6 compares two valid reduction paths; Challenge A.7 separates structural completion from elementary closed form; and the pair A.5--A.8 shows that substantially different algebraic representations can lead to the same reduced equation. Figures~\ref{fig:app-diagonal} and~\ref{fig:app-mobius} make the last point geometrically: a coordinate change reorganizes an opaque family into one for which a simple rescaling exposes the potential.

In an AI-assisted setting, the same sequence also suggests a division of labor that is mathematically testable rather than rhetorical. A system may differentiate a proposed coordinate, simplify a pullback, integrate a one-variable expression, or search among candidate substitutions. The student can still be required to decide why a coordinate is admissible, why a multiplier is nonvanishing on the chosen branch, whether an excluded set contains genuine solutions, and whether the final $G$ differentiates back to the original equation. Thus the common heading becomes operational: very different first-order methods are treated as instances of the same repeated act of structural reduction.

\section*{Declaration of generative AI use}
After the substantive content was fully authored, OpenAI ChatGPT  tools were utilized to enhance readability, suggest structural flow improvements, improve language, proofread the manuscript, assist with LaTeX and code preparation, and conduct literature and source searches concerning prior work and originality. All AI-assisted material was independently reviewed and, where necessary, revised by the author.

\end{document}